\documentclass{amsart}
\usepackage{amsthm}

\newtheorem{theorem}{Theorem}
\newtheorem{lemma}[theorem]{Lemma}
\newtheorem{corollary}[theorem]{Corollary}
\newtheorem{remark}{Remark}
\newtheorem{example}{Example}

\title[Almost everywhere convergence of subsequences\ldots]{Almost everywhere convergence of subsequences of Walsh-N\"orlund means}
\author[I. Blahota]{Istv\'an Blahota}
\address{I. Blahota, Institute of Mathematics and Computer Sciences, University of Ny\'\i regyh\'aza, H-4400 Ny\'\i regyh\'aza, S\'ost\'oi street 31/b, Hungary}
\email{blahota.istvan@nye.hu}
\subjclass{42C10} 	
\keywords{Walsh group, Walsh-Paley system,  Walsh-Fourier series, N\"orlund means, almost everywhere convergence}

\begin{document}
\begin{abstract}
Let $\{q_{k}: k\in\mathbb{N}\}$ be a non-increasing and convex sequence of non-negative numbers such that $q_{0}>0$. Our main result is to prove almost everywhere convergence 
\[
	t_{a_{n}}(f)\to f
\]	
of $f\in L_{1}(G)$ as $n\to\infty$, using several $\{a_{n}: n\in\mathbb{P}\}$ subsequences of positive integers for N\"orlund means (defined by sequence $q$) on the Walsh-Paley system with weaker conditions than it was known before.
\end{abstract}

\maketitle

\section{Definitions and notations}

Let $\mathbb{P}$ be the set of positive natural numbers and $\mathbb{N}:=\mathbb{P}\cup \{0\}$.
Let denote the discrete cyclic group of order
$2$ by ${\mathbb Z}_2$. The group operation is the modulo $2$ addition. Let every subset
be open. The normalized Haar measure $\mu$ on ${\mathbb Z}_{2}$ is given
in the way that $\mu (\{ 0\})=\mu (\{ 1\})=1/2$. That is, the measure of a singleton is $1/2$. $G :=
\overset{\infty}{\underset{k=0}{{\times}}} {\mathbb Z}_{2},$
$G$
is called the dyadic group. The elements of dyadic group $G$ are the $0,1$ sequences. That is,
$x=(x_0,x_1,\dots,x_k,\dots)$ with $x_k\in \{0,1\}$,  where $k\in \mathbb{N}$.

The group operation on $G$ is the coordinate-wise addition (denoted by $+$),
the normalized Haar measure  $\mu $ is the product measure and the topology is the product topology. For an other topology on the dyadic group see e.g. \cite{BSU}.

Dyadic intervals are defined in the usual way
\[
I_0(x):=G,\quad I_n(x):=\{y\in G:y=(x_0,\dots,x_{n-1},y_n,y_{n+1},\dots)\}
\]
for $x\in G$ and $n\in \mathbb{P}$. They form a base for the neighbourhoods of $G$.

For representing elements of $G$ in the interval $[0, 1)$, we use Fine's map is defined by
\[
|x|:=\sum_{i=0}^\infty \frac{x_i}{2^{i+1}} \quad \textrm{for all }x\in G.
\]

Let $L_{p}(G)$ denote the usual Lebesgue spaces on $G$, where $1\leq p<\infty$  (with the corresponding norm $\Vert .\Vert_p$).

Now, we introduce some concepts of Walsh-Fourier analysis. The Rademacher functions are defined on $x\in G$ and $k\in\mathbb{N}$ as
\[
r_k(x):=(-1)^{x_k}.
\]
The sequence of the Walsh-Paley functions is the product system of the Rademacher functions. Namely,
every $n\in\mathbb{N}$ can uniquely be expressed in the number system based $2$, in the form
\[
n=\sum_{k=0}^\infty n_{k}2^{k},\quad  n_{k}\in \{ 0,1\} \quad (k\in\mathbb{N}),
\]
where only a finite number of $n_{k}$'s differ from zero.
Let the order of $n\in\mathbb{N}$ be denoted by $\vert n\vert :=\max \{ k\in
\mathbb{N}: n_{k}\neq 0\}.$  It means $2^{|n|}\leq n< 2^{|n|+1}$ and $|n|=[\log_{2}(n)]$. The Walsh-Paley functions are $w_0:=1$ and 
\[
w_n(x):=\prod_{k=0}^{\infty }r^{n_k}_k(x)
=(-1)^{\sum_{k=0}^{\vert n\vert }n_kx_k}
\]
for $n\in\mathbb{P}$.

The $k$th Fourier-coefficient, the $n$th partial sum of the Fourier series, the $n$th Dirichlet kernel, the $n$th Fej\'er kernel and the $n$th Fej\'er mean are defined as
\begin{eqnarray*}
	\hat f(k):=
	\int\limits_{G} f w_{k} d\mu,
	\quad
	S_{n}(f):=\sum_{k=0}^{n-1}\hat f(k)w_{k}
	\quad D_{n}:=
	\sum_{k=0}^{n-1}w_{k}, \quad D_0:=0,
\end{eqnarray*}
\begin{eqnarray*}
	K_{n}:=\frac{1}{n}\sum_{j=1}^{n}D_{j},\quad
	\sigma_{n}(f):=\frac{1}{n}\sum_{j=1}^{n}S_{j}(f),
\end{eqnarray*}
where $k\in\mathbb{N}$ and $n\in\mathbb{P}$.

Let $\{ q_{k}: k\in\mathbb{N}\}$ be a sequence of non-negative numbers. The $n$th N\"orlund means (as  generalizations of the Fej\'er mean) of the Walsh(-Paley)-Fourier series is defined by
\[
	t_{n}(f):=\frac{1}{Q_{n}}\sum_{k=1}^{n}q_{n-k}S_k(f),
\]
where $Q_n:=\sum_{k=0}^{n-1}q_{k}$ and $n\in\mathbb{P}$. It is always assumed that $q_0>0$ and 
\[
\lim_{n\to\infty}Q_{n}=\infty.
\]
Let us define the $n$th N\"orlund kernel as
\begin{eqnarray*}
	F_{n}:=\frac{1}{Q_{n}}\sum_{j=1}^{n}q_{n-j}D_{j}.
\end{eqnarray*}

It is easily seen that
\[
	\sigma_{n}(f;x)=\int_{G}f(u+x)K_{n}(u)d\mu(u)
\]
and
\[
	t_{n}(f;x)=\int_{G}f(u+x)F_{n}(u)d\mu(u)
\]
where $x,u\in G$. 

We say that sequence $\{q_{k}:k\in\mathbb{N}\}$ is convex if 
\[
	2q_{k+1}\leq q_{k}+q_{k+2}\quad (\Leftrightarrow q_{k+1}-q_{k+2}\leq q_{k}-q_{k+1})
\] 
holds for all $k\in\mathbb{N}$.
\begin{remark}
	From now on throughout the article $c$ denotes a positive absolute constant, which may vary at different appearances.
\end{remark}

\section{N\"orlund means on the Walsh-Fourier system}

A number of scientists have dealt with N\"orlund means with respect to the one dimensional Walsh-Paley system. Including but not limited to M\'oricz and Siddiqi \cite{MS}, Fridli, Manchanda and Siddiqi \cite{FMS}, Baramidze, Persson, Tangrand and Tephnadze \cite{BPTT}, Areshidze and Tephnadze \cite{AT} and Goginava and K. Nagy \cite{GN}. Additional articles have been written on multidimensional Walsh-Paley systems, as well as Vilenkin systems (in one and multidimensional) which are more general than the Walsh-Paley system, but the discussion of these is beyond the scope of our article.

The results in several mentioned articles have in common that the defining sequences $\{ q_{k}: k\in\mathbb{N}\}$ are monotonic. And it is common that in these papers we have assumptions. In non-increasing cases we usually assume that $1/Q_{n}=O(1/n)$. Our main result is Theorem \ref{main}. What we suppose in this convergence theorem is a weaker condition for several subsequences of N\"orlund means. In Section \ref{examples} we give an application of the Theorem \ref{main} to a special subsequence, which clearly show the strengths of the Theorem in applications.

In the following, we present a result that is directly related to this article.

Originally, the next theorem (see also Corollary \ref{BNPT_corollary}) was proved for bounded Vilenkin systems.
\begin{theorem}[Baramidze, Nadirashvili, Persson and Tephnadze \cite{BNPT}]\label{BNPT_theorem}
	Let $f\in L_{1}(G)$.  Let $\{q_{k}: k\in\mathbb{N}\}$ be a non-increasing sequence of non-negative numbers, where $q_{0}>0$. Let us suppose that  
	\begin{equation*}
		cn\leq Q_{n}.
	\end{equation*}
	Then
	\[
	t_{n}(f)\to f
	\]
	as $n\to\infty$, almost everywhere.
\end{theorem}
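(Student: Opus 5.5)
The plan is to compare $t_n(f)$ with Fejér means via Abel summation and then use the classical almost everywhere convergence $\sigma_n(f)\to f$ for $f\in L_1(G)$ (Fine's theorem). Summation by parts in the definition of $F_n$ gives, since $D_j=jK_j-(j-1)K_{j-1}$,
\[
F_n=\frac{1}{Q_n}\Bigl(\sum_{j=1}^{n-1}(q_{n-j}-q_{n-j-1})\,jK_j+q_0\,nK_n\Bigr),
\]
and therefore
\[
t_n(f)=\frac{1}{Q_n}\Bigl(\sum_{j=1}^{n-1}(q_{n-j}-q_{n-j-1})\,j\,\sigma_j(f)+q_0\,n\,\sigma_n(f)\Bigr).
\]
Because $q$ is non-increasing, every coefficient $q_{n-j-1}-q_{n-j}\ge 0$, and the weights add up to $Q_n$. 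Indeed, $\sum_{j=1}^{n-1}(q_{n-j}-q_{n-j-1})j+q_0n=Q_n$, which follows from telescoping, or from testing the identity with $f\equiv 1$.

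Next I write $t_n(f)-f$ as the same combination applied to $\sigma_j(f)-f$. This gives
\[
|t_n(f)-f|\le \frac{1}{Q_n}\sum_{j=1}^{n-1}(q_{n-j-1}-q_{n-j})\,j\,|\sigma_j(f)-f|+\frac{q_0 n}{Q_n}|\sigma_n(f)-f|.
\]
So $t_n$ is a regular Toeplitz-type transform of the sequence $\sigma_j(f)$, but with signed weights $(q_{n-j}-q_{n-j-1})j$ and $q_0n$. The total absolute weight is
\[
\frac{1}{Q_n}\Bigl(\sum_{j=1}^{n-1}(q_{n-j-1}-q_{n-j})j+q_0n\Bigr)\le \frac{2q_0n}{Q_n},
\]
using $\sum_{j=1}^{n-1}(q_{n-j-1}-q_{n-j})j\le n(q_0-q_{n-1})\le nq_0$. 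The hypothesis $Q_n\ge cn$ makes this bounded uniformly in $n$.

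For each fixed $j$, the weight satisfies $(q_{n-j-1}-q_{n-j})j/Q_n\le jq_0/Q_n\to 0$, since $Q_n\to\infty$. The Silverman–Toeplitz conditions therefore hold: bounded total variation, column entries tending to $0$, and row sums equal to $1$. At each point $x$ where $\sigma_j(f;x)\to f(x)$, the sequence $t_n(f;x)\to f(x)$ follows. Concretely, given $\varepsilon>0$, I choose $N$ with $|\sigma_j(f;x)-f(x)|<\varepsilon$ for $j>N$. The tail is then bounded by $2q_0c^{-1}\varepsilon$. The head is finitely many terms, each bounded by $jq_0|\sigma_j(f;x)-f(x)|/Q_n\to 0$. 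Since Fejér means of the Walsh–Fourier series of $f\in L_1(G)$ converge almost everywhere, this completes the argument.

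The only delicate step is the sign bookkeeping. It must be checked that the last term carries $q_0 n$, and not $q_{0}$ combined with a telescoped remainder, and that monotonicity is exactly what makes the absolute weights sum to at most a multiple of $n/Q_n$. Beyond that step, the argument relies on no convexity and only on Fine's Fejér theorem.
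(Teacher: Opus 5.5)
Your proof is correct. The Abel transformation
$F_n=Q_n^{-1}\bigl(\sum_{j=1}^{n-1}(q_{n-j}-q_{n-j-1})jK_j+q_0nK_n\bigr)$ is right, and the signed weights sum to exactly $Q_n$. The absolute weights sum to $2q_0n-Q_n\le (2q_0/c-1)Q_n$. So $t_n$ is a regular Toeplitz transform of $(\sigma_j(f))$, and Fine's theorem finishes the argument.

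The paper itself gives no proof of this theorem; it only quotes it from \cite{BNPT}. The relevant comparison is therefore with the machinery the paper uses for Theorem \ref{main}. That machinery has four parts:
\begin{enumerate}
\item the kernel decomposition of Lemma \ref{lemma2};
\item the quasi-locality estimate of Lemma \ref{main_lemma}, resting on G\'at's Lemma \ref{lgat};
\item the uniform $L_1$ bound of Lemma \ref{norm};
\item weak type $(L_1,L_1)$, Marcinkiewicz interpolation and a density argument.
\end{enumerate}

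Your route is far more elementary. It even gives the pointwise maximal bound $\sup_n|t_n(f)|\le (2q_0/c)\sup_n|\sigma_n(f)|$. So the weak type of the N\"orlund maximal operator is inherited directly from the Fej\'er maximal operator, without convexity and without any kernel estimates.

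What your route cannot do is go below $Q_n\ge cn$. Your total variation is $2q_0n/Q_n-1$, and it blows up along the subsequences the paper is really after:
\begin{itemize}
\item $a_n=2^n$ with an arbitrary non-increasing $q$, e.g.\ $q_k=1/(k+1)$, in Corollary \ref{BNPT_corollary};
\item $a_n=2^n+[n^\beta]$ with only $Q_{a_n}\ge cn^{\beta+1}$, in the Example.
\end{itemize}

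The paper's decomposition avoids these large, mutually cancelling weights $\pm q_0 n/Q_n$. It splits off $D_{2^{|n|}}$ and uses the reflection by $w_{2^{|n|}-1}$. After that, the Fej\'er kernels appear only with weights $(2^{|n|}-1)q_{n-1}/Q_n\le 1$ and $\sum_k k(q_{\cdot}-q_{\cdot+1})/Q_n\le 1$ (inequalities \eqref{first}, \eqref{second}). The only piece that must still be paid for is the Dirichlet tail of length $n-2^{|n|}$, and that is exactly what Condition \eqref{cond} controls. In short, your argument is the natural proof of the quoted theorem, while the paper's heavier method is what makes the subsequence results possible.
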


\section{Auxiliary results}	

\begin{lemma}[See e.g. \cite{SWSP}]\label{egyperx}
	\[
		|D_{n}(x)|\leq \min\left\{n,\frac{2}{x}\right\}
	\]
\end{lemma}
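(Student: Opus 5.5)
The inequality $|D_n(x)|\le n$ is immediate from the definition $D_n=\sum_{k=0}^{n-1}w_k$ and $|w_k|=1$. So the work is in the bound $|D_n(x)|\le 2/x$, where $x$ stands for $|x|$ via Fine's map, and $x\neq 0$ (otherwise the bound is vacuous).

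For the second bound I will use the standard closed form of the Walsh--Dirichlet kernel:
\[
D_n(x)=w_n(x)\sum_{k=0}^{\infty} n_k\, r_k(x)\, D_{2^k}(x),
\qquad D_{2^k}(x)=\begin{cases}2^k,& x\in I_k(0),\\ 0,& \text{otherwise}.\end{cases}
\]
I will verify the second formula by writing $D_{2^k}=\prod_{i=0}^{k-1}(1+r_i)$. The first formula follows by induction on the binary digits of $n$. The key fact is that for $2^k\le j<2^{k+1}$ we have $w_j=r_k w_{j-2^k}$. Hence
\[
D_n=D_{2^{|n|}}+r_{|n|}D_{n-2^{|n|}},
\]
and then one uses $w_n=r_{|n|}w_{n-2^{|n|}}$ together with $r_k^2=1$.

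Next let $x\neq 0$ and let $m$ be the smallest index with $x_m=1$. Then $x\in I_k(0)$ exactly when $k\le m$. So only the terms with $k\le m$ survive, and
\[
|D_n(x)|\le\sum_{k=0}^{m}2^k<2^{m+1}.
\]
On the other hand, $x_0=\dots=x_{m-1}=0$ gives
\[
|x|=\sum_{i\ge m}x_i2^{-i-1}\le 2^{-m},
\]
so $2^{m+1}\le 2/|x|$. This yields $|D_n(x)|\le 2/|x|$.

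The only delicate step is establishing the product formula and the support of $D_{2^k}$ cleanly. Everything else is a geometric sum, so I do not expect a real obstacle.
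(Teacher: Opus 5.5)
Your proof is correct. It is the standard route to this bound: the paper gives no proof of its own and only cites Schipp--Wade--Simon--P\'al, and the usual derivation uses the representation $D_n=w_n\sum_k n_k r_k D_{2^k}$ together with the support formula for $D_{2^k}$. One detail needs to be made explicit: closing your induction requires, besides $w_n=r_{|n|}w_{n-2^{|n|}}$ and $r_k^2=1$, the identity $w_{n-2^{|n|}}D_{2^{|n|}}=D_{2^{|n|}}$, which holds because $w_j=1$ on $I_{|n|}(0)$ for every $j<2^{|n|}$; with that added, $|D_n(x)|<2^{m+1}\le 2/|x|$ follows as you wrote.
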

\begin{lemma}[Yano \cite{Y1}]
	Let $n\in \mathbb{P}$, then
	\[
	\left\|K_{n}\right\|_{1}\leq 2.
	\]\end{lemma}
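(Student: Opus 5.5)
The plan is to first treat the dyadic indices $n=2^m$, and then pass to general $n$ through a binary recursion combined with induction on $n$.

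\emph{Step 1 (dyadic case).} I will show that $K_{2^m}\ge 0$. Since $\int_G K_{2^m}\,d\mu=\hat{K}_{2^m}(0)=1$, this gives $\|K_{2^m}\|_1=1$. The route to nonnegativity is the classical closed formula
\[
K_{2^m}(x)=\frac12\Bigl(2^{-m}D_{2^m}(x)+\sum_{l=0}^{m}2^{l-m}D_{2^m}(x+e_l)\Bigr),
\]
where $e_l$ denotes the element of $G$ whose only nonzero coordinate is the $l$th one. I would prove this formula by induction on $m$, using two facts:
\begin{itemize}
\item the splitting $D_{2^m+j}=D_{2^m}+r_mD_j$ for $0\le j\le 2^m$;
\item the formula $D_{2^m}=2^m\mathbf 1_{I_m(0)}$, which is nonnegative.
\end{itemize}
If the exact constants in the formula come out differently, only the positivity of each term matters here. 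Concretely, each term is a nonnegative multiple of the indicator of a dyadic interval.

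\emph{Step 2 (recursion).} Let $n=2^m+k$ with $0\le k<2^m$. Using the same splitting of the Dirichlet kernels,
\[
nK_n=\sum_{j=1}^{2^m}D_j+\sum_{j=1}^{k}\bigl(D_{2^m}+r_mD_j\bigr)=2^mK_{2^m}+kD_{2^m}+r_m\,kK_k .
\]
Since $|r_m|=1$ and $\|D_{2^m}\|_1=1$, this yields
\[
n\|K_n\|_1\le 2^m\|K_{2^m}\|_1+k+k\|K_k\|_1 .
\]

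\emph{Step 3 (induction on $n$).} Assume $\|K_k\|_1\le 2$ for all $k<n$, and use Step 1. Then
\[
n\|K_n\|_1\le 2^m+3k\le 2(2^m+k)=2n,
\]
where the middle inequality holds because $k\le 2^m$. The case $k=0$ is exactly Step 1, which closes the induction.

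The main obstacle is Step 1, namely establishing the nonnegativity of $K_{2^m}$ via the closed formula. Once that is in hand, the rest is elementary bookkeeping.
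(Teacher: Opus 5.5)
Your proof is correct. The paper does not prove this lemma; it only quotes it from Yano \cite{Y1}, so there is no argument in the text to compare against. What you give is a self-contained version of the classical proof. Unroll your recursion $nK_n=2^mK_{2^m}+kD_{2^m}+r_m\,kK_k$ along the binary expansion $n=2^{m_1}+\dots+2^{m_s}$ with $m_1>\dots>m_s$. This gives the familiar decomposition $nK_n=\sum_{i=1}^{s}\varepsilon_i\bigl(2^{m_i}K_{2^{m_i}}+k_iD_{2^{m_i}}\bigr)$, where $\varepsilon_i=r_{m_1}\cdots r_{m_{i-1}}$ and $k_i=2^{m_{i+1}}+\dots+2^{m_s}<2^{m_i}$. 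Hence $n\|K_n\|_1\le n+\sum_i k_i<2n$. Your induction is this same estimate in rolled-up form, with $k\le 2^m$ playing the role of $k_i<2^{m_i}$.

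The step you flag as the main obstacle is actually the easiest. Your splitting $D_{2^m+j}=D_{2^m}+r_mD_j$ holds for all $j\le 2^m$, so Step 2 with $k=2^m$ gives
\[
K_{2^{m+1}}=\frac{1+r_m}{2}\,K_{2^m}+\frac12\,D_{2^m}.
\]
Since $1+r_m\ge 0$, $D_{2^m}\ge 0$ and $K_1=1$, nonnegativity of $K_{2^m}$ (hence $\|K_{2^m}\|_1=1$) follows by induction, without the closed formula. That formula is nonetheless correct as you state it. Note also that this triangle-inequality argument cannot beat the constant $2$. For $n=2^{m+1}-1$ one gets $\sum_i k_i=n-(m+1)$, so the bound is $2-(m+1)/n\to 2$. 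The sharp value $17/15$ of Lemma \ref{Tol} therefore needs a genuinely finer analysis. For every use of these kernel bounds in the paper, any uniform constant such as your $2$ suffices.
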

In 2018, Toledo improved Yano's classical result.
\begin{lemma}[Toledo \cite{Tol}]\label{Tol}  
	\[
	\sup_{n\in\mathbb{P}}\|K_{n}\|_{1}= \frac{17}{15}.
	\]
\end{lemma}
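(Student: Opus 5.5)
The plan is to prove the two inequalities $\sup_n\|K_n\|_1\ge 17/15$ and $\sup_n\|K_n\|_1\le 17/15$ separately, both resting on a recursion along the binary digits of $n$. For $n=2^m+k$ with $0\le k<2^m$ we have $D_{2^m+j}=D_{2^m}+r_mD_j$ for $0\le j\le 2^m$. Summing this over $j$ gives
\[
nK_n=2^mK_{2^m}+kD_{2^m}+r_m\,kK_k .
\]
Here $D_{2^m}=2^m\mathbf 1_{I_m(0)}$, and $K_{2^m}$ has the classical explicit form: $2^mK_{2^m}(x)=\tfrac12\bigl(2^m+1\bigr)2^m$ on $I_m(0)$, and $2^mK_{2^m}=2^{j-1}\cdot 2^m\dots$ on the sets $I_m(e_j)$, $j<m$, and zero elsewhere. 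In particular $K_{2^m}\ge0$ and $\|K_{2^m}\|_1=1$.

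\textbf{Step 1 (recursion).} Split $G$ into $I_m(0)$ and its complement. The term $kK_k$ depends only on the coordinates $x_0,\dots,x_{m-1}$, and $r_m$ takes the values $\pm1$ with equal weight on each cylinder of rank $m$. Using this, the integral of $|nK_n|$ over $G\setminus I_m(0)$ reduces to integrals of $\bigl||2^mK_{2^m}|\pm kK_k\bigr|$. On $I_m(0)$ everything is explicit.

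To get a closed system, I track two quantities for each $k$:
\[
A_k:=k\|K_k\|_1,\qquad B_k:=\int_{G\setminus I_{|k|+1}(0)}\bigl(k|K_k|-kK_k\bigr)\,d\mu ,
\]
that is, $A_k$ together with the negative part of the kernel. Since $|a+b|=a+b+2b^-$ when $a\ge0$ dominates appropriately, $\int|nK_n|$ can be written as $n$ plus twice the negative mass. The recursion then becomes a linear inequality expressing $(A_n,B_n)$ in terms of $(A_k,B_k)$ with coefficients that are powers of $2^{-1}$ depending on the gap $m-|k|$.

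\textbf{Step 2 (lower bound).} Take $n_s=\sum_{i=0}^{s}4^{i}$, whose binary digits alternate $1,0,1,0,\dots$. Iterating the identity of Step 1 gives $\|K_{n_s}\|_1$ as a geometric sum with ratio $1/4$. Its limit should be $1+2\sum_{i\ge1}(\ldots)4^{-i}=17/15$. Evaluating this sum is a finite computation with the explicit formulas above, and it gives $\sup\ge17/15$ as a limit; it may or may not be attained.

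\textbf{Step 3 (upper bound, main obstacle).} From the linear recursion I need $n\|K_n\|_1\le \tfrac{17}{15}n$ for all $n$. I would do this by induction on the number of binary digits, with an induction hypothesis strengthened to a weighted inequality of the form
\[
\text{negative mass of }nK_n\le \tfrac1{15}\,n-\varepsilon(n),
\]
with a correction $\varepsilon(n)\ge0$ depending on the position of the top digits. The delicate point is showing that the worst configuration of zeros and ones among the binary digits is exactly the alternating one. Adjacent ones, or gaps of two or more zeros, should lose a factor at least $1/4$ per step relative to the alternating pattern.

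To handle this I would first try to prove the one-step contraction directly: gap $1$ gives factor $1/4$, and larger gaps give smaller factors. If the crude triangle-inequality estimates lose too much, I would refine the splitting of $G\setminus I_m(0)$ into the rings $I_j\setminus I_{j+1}$, where $K_k$ is essentially constant in sign, and compute the contributions on each ring exactly.
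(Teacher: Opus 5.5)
The paper gives no proof of this lemma; it simply cites Toledo. Your argument therefore has to stand on its own, and as written it does not. Your identity $nK_n=2^mK_{2^m}+kD_{2^m}+r_m\,kK_k$ and your formula for $K_{2^m}$ are correct. But the upper bound $\|K_n\|_1\le 17/15$, which is the whole difficulty, is only a plan. Your induction hypothesis contains an unspecified correction $\varepsilon(n)$, the contraction by $1/4$ per digit is asserted with ``should'', and the extremality of the alternating pattern is assumed rather than proved. Step 2 is also left at ``should be'', with a placeholder in the sum.

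Step 1 also does not give a closed recursion in $(A_k,B_k)$. Let $S_m:=I_m(0)\cup\bigcup_{j<m}I_m(e_j)$ be the support of $K_{2^m}$, and let $N_n:=\int_G (nK_n)^-\,d\mu$. Off $S_m$ you have $nK_n=r_m\,kK_k$. Since $kK_k$ does not depend on $x_m$, the negative part there has mass $\frac12\int_{G\setminus S_m}|kK_k|\,d\mu$. On $S_m$, both $nK_n\ge 0$ and $kK_k\ge 0$. On $I_m(e_j)$ this needs $0\le kK_k(e_j)\le 2^{m+j-1}$, which holds because $i\mapsto D_i(e_j)$ is a nonnegative triangle wave of period $2^{j+1}$ and height $2^j$. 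Hence
\[
N_n=N_k+F(k,m),\qquad F(k,m):=\frac{k}{2}-\frac12\int_{S_m}kK_k\,d\mu .
\]
The driving term $F(k,m)$ depends on the point values $kK_k(0)=k(k+1)/2$ and $kK_k(e_j)$, i.e.\ on all binary digits of $k$. It is not determined by $A_k$, $B_k$ and the gap $m-|k|$. Your proposal never identifies or estimates this term, and that is where the constant $17/15$ lives.

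Once this identity is in place, no correction term is needed. It suffices to show $F(k,m)\le 2^m/15$ for all $0\le k<2^m$: then $N_n\le n/15$ follows by plain induction on $n$, and $\|K_n\|_1=1+2N_n/n\le 17/15$. Put $x=k/2^m$, $\tilde F(k,m):=2^{-m}F(k,m)$ and $M_m:=\max_{0\le k<2^m}\tilde F(k,m)$. Evaluating $\int_{S_m}kK_k$ via your decomposition of $kK_k$ at level $m-1$ gives
\begin{gather*}
\tilde F(k,m)\le\frac{x(1-x)}{4}+\frac{1}{4}\tilde F(k,m-1)\quad (k<2^{m-1}),\\
\tilde F(k,m)\le\frac{(1-x)^2}{4}+\frac{1}{4}\tilde F(k-2^{m-1},m-1)\quad (k\ge 2^{m-1}).
\end{gather*}
Apply these twice. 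If the digits of $k$ at positions $m-1,m-2$ are $01$ or $10$, the explicit parts sum to exactly $1/16$, respectively $(1-x)/8\le 1/16$, so $\tilde F(k,m)\le\frac1{16}+\frac1{16}M_{m-2}$. If they are $00$ or $11$, one step already gives $\tilde F(k,m)\le\frac{3}{64}+\frac14 M_{m-1}$. Since $M_0=M_1=0$, induction yields $M_m\le\frac1{15}$, the fixed point of $M=\frac1{16}+\frac{M}{16}$. This is the precise form of your heuristic that the alternating pattern is extremal.

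The same identities along $n_s=\sum_{i\le s}4^i$ give $N_{n_s}/n_s\to\frac1{15}$, which completes your Step 2. Note that this is not an exact geometric sum: $\int_{S_{2s+2}}n_sK_{n_s}\,d\mu/n_s$ only tends to $3/5$ as $s\to\infty$.
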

\begin{lemma}[G\'at \cite{G1}]\label{lgat}
	\[
	\int_{\bar{I}_{j}}\sup_{n\geq2^{j}}|K_{n}(x)|d\mu(x)\leq c
	\]
\end{lemma}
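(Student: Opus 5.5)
The plan is to obtain a pointwise majorant for $\sup_{n\ge 2^j}|K_n(x)|$ on $\bar I_j = G\setminus I_j$ (where $I_j=I_j(0)$) that is a sum of indicator functions of small dyadic intervals with explicit weights, and then to integrate this majorant term by term.

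\textbf{Step 1: reduce $K_n$ to the dyadic kernels $K_{2^A}$.} Fix $n$ with $2^N\le n<2^{N+1}$, $N\ge j$. I will use the standard decomposition obtained by splitting $\{1,\dots,n\}$ along the binary digits of $n$:
\[
nK_n=\sum_{A=0}^{N} n_A\Bigl(\prod_{k=A+1}^{N} r_k^{n_k}\Bigr)\Bigl(2^A K_{2^A}+ n^{(A+1)}D_{2^A}\Bigr),\qquad n^{(A+1)}:=\sum_{k>A}n_k2^k,
\]
possibly up to a bounded number of extra $D$-terms, all controlled by Lemma \ref{egyperx}.

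\textbf{Step 2: the dyadic kernels.} For the $2^A K_{2^A}$ terms I will use Fine's formula for the dyadic Fej\'er kernel:
\[
2^AK_{2^A}(x)=\frac12\Bigl(2^AD_{2^A}(x)+\sum_{t=0}^{A-1}2^tD_{2^A}(x+e_t)\Bigr),\qquad D_{2^A}=2^A\mathbf 1_{I_A}.
\]
Take $x\in I_t\setminus I_{t+1}$ with $t<j\le N$. Then $D_{2^A}(x)=0$ whenever $A>t$. This leaves the estimate
\[
|K_{2^A}(x)|\le c\sum_{s=0}^{A-1}2^{s}\,\mathbf 1_{I_A(e_s)}(x)+(\text{terms supported on }I_A),
\]
and on $\bar I_j$ only $s<j$ survives in this sum.

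\textbf{Step 3: the pointwise majorant.} Dividing by $n\ge 2^N$, the $K$-part of Step 1 is bounded by
\[
c\sum_{A=j}^{N}2^{A-N}\sum_{s<j}2^s\mathbf 1_{I_A(e_s)}(x).
\]
Taking the supremum over $N\ge j$, I will bound $\sum_N 2^{A-N}$ by a geometric series. This gives the majorant
\[
c\sum_{s<j}\sum_{A\ge j}2^{s}\mathbf 1_{I_A(e_s)}(x)\,2^{A-\max(A,j)}.
\]
Replacing the supremum by this sum, and using $\mu(I_A(e_s))=2^{-A}$, its integral is at most
\[
c\sum_{s<j}2^s\sum_{A\ge j}2^{-A}\le c\sum_{s<j}2^{s-j}\le c .
\]

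\textbf{Step 4: the $D$-terms (main obstacle).} What remains are the terms $\frac{n^{(A+1)}}{n}D_{2^A}$ with $A\le t<j$. Estimated individually they are only $O(2^t)$ on $I_t\setminus I_{t+1}$, and summing over $t<j$ gives a bound of order $j$, not $O(1)$. So cancellation is needed. I will first try to regroup these terms by the identity $D_n=w_n\sum_A n_A r_AD_{2^A}$, so that their sum becomes an average of Dirichlet kernels over a block of consecutive indices of length comparable to $2^{t}$. On $I_t\setminus I_{t+1}$ the Walsh factor $r_t$ alternates along such a block, and I expect the surviving part to be of size $c\,2^{t}\cdot 2^{t-N}$, supported on $I_N$-type shifts, after which the same geometric summation as in Step 3 applies. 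If this regrouping fails, the fallback is to compare $K_n$ with $K_{2^N}$ through the Abel transform $nK_n-2^NK_{2^N}=\sum_{k=2^N+1}^{n}D_k$. The difference is then handled via $D_{2^N+m}=D_{2^N}+r_ND_m$, so that on $\bar I_j$ (where $D_{2^N}=0$) it reduces to $(n-2^N)K_{n-2^N}$, and an induction on the number of nonzero binary digits of $n$ closes the estimate.
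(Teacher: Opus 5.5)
\textbf{Step 1 is false, and Step 4 rests on it.} The paper does not prove this lemma; it quotes it from G\'at. The standard argument is essentially your Steps 1--3. The problem is your Step 1 identity.

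Split $\{1,\dots,n\}$ into the blocks $(n^{(A+1)},n^{(A+1)}+2^A]$. Use $D_{n^{(A+1)}+m}=D_{n^{(A+1)}}+w_{n^{(A+1)}}D_m$ for $m\le 2^{A+1}$ and $D_{n^{(A+1)}}=w_{n^{(A+1)}}\sum_{k>A}n_kr_kD_{2^k}$. Then regroup by $k$, using that on $I_k$ (the support of $D_{2^k}$) every $r_i$ with $i<k$ equals $1$. This gives
\[
nK_n=\sum_{A=0}^{N}n_A\Bigl(\prod_{k=A+1}^{N}r_k^{n_k}\Bigr)\bigl(2^AK_{2^A}+n_{(A)}D_{2^A}\bigr),\qquad n_{(A)}:=\sum_{k<A}n_k2^k<2^A .
\]
So the coefficient of $D_{2^A}$ is the part of $n$ \emph{below} $2^A$, not the part above it. Already for $n=3$ your formula gives $2K_2+3r_1$, whereas $3K_3=D_1+D_2+D_3=2K_2+D_2+r_1$. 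At any $x$ with $x_0=1$ these equal $1+3r_1(x)$ and $1+r_1(x)$. The discrepancy has coefficients as large as $n$, so it is not ``a bounded number of extra $D$-terms''.

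\textbf{With the correct coefficient, Step 4 needs no cancellation.} Take $x\in I_t\setminus I_{t+1}$ with $t<j\le N$. Then $D_{2^A}(x)=0$ for $A>t$. For $A\le t$, both $n_{(A)}|D_{2^A}(x)|/n$ and $2^A|K_{2^A}(x)|/n$ are at most $2^{2A-N}$; your Step 3 also drops the latter terms. After $\sup_{N\ge j}$ they contribute at most $c\,2^{2t-j}$ on $I_t\setminus I_{t+1}$, and $\sum_{t<j}2^{-t-1}c\,2^{2t-j}\le c$. The terms with $t<A<j$, also missing from Step 3, are bounded by $2^{A-j}2^{t-1}\mathbf 1_{I_A(e_t)}$ and integrate to at most $\sum_{t<j}(j-t)2^{t-1-j}\le c$.

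\textbf{As submitted, Step 4 is not an argument.} The regrouping is only an expectation, and the fallback induction does not close. The relation $nK_n=2^NK_{2^N}+r_N(n-2^N)K_{n-2^N}$ on $\bar I_j$ uses $D_{2^N}=0$, which holds only while the leading exponent is $\ge j$. Peeling off all digits $\ge j$ leaves $n_{(j)}K_{n_{(j)}}/n$ with $n_{(j)}<2^j$. There no hypothesis about $\sup_{n\ge 2^j}$ applies, and the kernels $D_{2^A}$ with $A<j$ do not vanish. Continuing the peeling below level $j$ produces $D_{2^{|m|}}$ with coefficient $m-2^{|m|}<2^{|m|}$. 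That is exactly the corrected identity above, and it is the missing ingredient.

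A minor point: Fine's formula should read $2^AK_{2^A}(x)=\frac12\bigl((2^A+1)D_{2^A}(x)+\sum_{t<A}2^tD_{2^A}(x+e_t)\bigr)$. The term you left out lives on $I_A$ and is harmless.
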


The following kernel decomposition result was originally proved in \cite{B1} for so-called matrix transform kernels, which are generalizations of N\"orlund kernels.
\begin{lemma}[Blahota \cite{B1}]\label{lemma2}
	Let $n\in\mathbb{P}$ and $\{q_{k}: k\in\mathbb{N}\}$ be a sequence of real numbers. Then
	\begin{align*}	
		F_{n}=&D_{2^{|n|}}-w_{2^{|n|}-1}K_{2^{|n|}-1}\left(2^{|n|}-1\right)q_{n-1}/Q_{n}\\
		&+w_{2^{|n|}-1}\sum_{k=1}^{2^{|n|}-2}K_{k}k\left( q_{n-2^{|n|}+k+1}-q_{n-2^{|n|}+k}\right)/Q_{n}\\
		&+r_{|n|}\sum_{k=1}^{n-2^{|n|}}D_{k}q_{n-2^{|n|}-k}/Q_{n}.
	\end{align*}
\end{lemma}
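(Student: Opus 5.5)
The plan is to prove the identity purely algebraically, with no analytic estimates. We multiply through by $Q_n$, split the defining sum of $F_n$ at the dyadic block $2^{|n|}$, and then handle the two pieces with two standard Walsh identities and one Abel summation. Write $N:=|n|$ and $m:=n-2^N$, so that $0\le m<2^N$. Then
\[
Q_nF_n=\sum_{j=1}^{2^N}q_{n-j}D_j+\sum_{j=2^N+1}^{n}q_{n-j}D_j .
\]

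\textbf{Upper part.} For $j=2^N+k$ with $1\le k\le m<2^N$ we use the well-known identity $D_{2^N+k}=D_{2^N}+r_ND_k$. It follows from $w_{2^N+i}=r_Nw_i$ for $0\le i<2^N$. Applying it, the second sum becomes
\[
D_{2^N}\sum_{k=0}^{m-1}q_k+r_N\sum_{k=1}^{m}q_{m-k}D_k .
\]
The second term here is exactly the last term of the claimed formula.

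\textbf{Lower part.} We substitute $j=2^N-i$ with $0\le i\le 2^N-1$. We then use the reflection identity $D_{2^N-i}=D_{2^N}-w_{2^N-1}D_i$, valid for $0\le i<2^N$. It follows from $w_{2^N-1-k}=w_{2^N-1}w_k$ for $k<2^N$, since $2^N-1-k=(2^N-1)\oplus k$ in dyadic addition. The first sum then becomes
\[
D_{2^N}\sum_{i=0}^{2^N-1}q_{m+i}-w_{2^N-1}\sum_{i=1}^{M}q_{m+i}D_i,\qquad M:=2^N-1 .
\]
The two coefficients of $D_{2^N}$, from the lower and upper parts, add up to $\sum_{k=0}^{n-1}q_k=Q_n$. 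This gives the leading term $D_{2^N}$ after dividing by $Q_n$.

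\textbf{Abel summation.} It remains to rewrite $\sum_{i=1}^{M}q_{m+i}D_i$. We substitute $D_i=iK_i-(i-1)K_{i-1}$, which follows from the definition of $K_i$ (with $0\cdot K_0:=0$), and sum by parts. This gives
\[
\sum_{i=1}^{M}q_{m+i}D_i=MK_Mq_{m+M}+\sum_{i=1}^{M-1}iK_i\,(q_{m+i}-q_{m+i+1}).
\]
Since $m+M=n-1$, multiplying by $-w_{2^N-1}$ and dividing by $Q_n$ produces the term $-w_{2^N-1}K_{2^N-1}(2^N-1)q_{n-1}/Q_n$. It also produces the sum $w_{2^N-1}\sum_{k=1}^{2^N-2}K_kk\,(q_{n-2^N+k+1}-q_{n-2^N+k})/Q_n$, with the sign flipped as required.

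The only delicate points are verifying the reflection identity and keeping the index bookkeeping consistent, namely $m+i=n-2^N+i$ and the upper limit $2^N-2$. Note that no monotonicity of $q$ is used, which is consistent with the lemma being stated for arbitrary real sequences.
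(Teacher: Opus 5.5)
Your proof is correct. Splitting $Q_nF_n$ at $j=2^{|n|}$, applying $D_{2^N+k}=D_{2^N}+r_ND_k$ and $D_{2^N-i}=D_{2^N}-w_{2^N-1}D_i$, and then summing by parts with $D_i=iK_i-(i-1)K_{i-1}$ reproduces every term with the right indices and signs. Your convention $0\cdot K_0:=0$ also covers the degenerate case $n=1$, where the statement itself contains $K_0\cdot 0$.

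The paper gives no proof of this lemma; it quotes it from \cite{B1}, where it is stated for general matrix transform kernels. Your argument is the standard derivation of it.
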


\section{Estimates of N\"orlund kernels}

The next corollary follows directly from Lemma \ref{lemma2}.
\begin{corollary}\label{cor}
	Let $\{a_{n}: n\in\mathbb{P}\}$ be a sequence of positive integers. Let $\{q_{k}:k\in\mathbb{N}\}$ be a sequence of non-negative numbers. Then
	\begin{align*}	
		\left|F_{a_{n}}\right|\leq&\left|D_{2^{|a_{n}|}}\right|+\left|K_{2^{|a_{n}|}-1}\right|\left(2^{|a_{n}|}-1\right)q_{a_{n}-1}/Q_{a_{n}}\\
		&+\sum_{k=1}^{2^{|a_{n}|}-2}|K_{k}|k\left|q_{a_{n}-2^{|a_{n}|}+k+1}-q_{a_{n}-2^{|a_{n}|}+k}\right|/Q_{a_{n}}\\
		&+\sum_{k=1}^{a_{n}-2^{|a_{n}|}}|D_{k}|q_{a_{n}-2^{|a_{n}|}-k}/Q_{a_{n}}.
	\end{align*}
\end{corollary}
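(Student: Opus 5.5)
The corollary follows from Lemma \ref{lemma2} by substitution and the triangle inequality. We apply Lemma \ref{lemma2} with $n:=a_{n}$; this is legitimate because $a_{n}\in\mathbb{P}$ and non-negative numbers are in particular real. This gives a representation of $F_{a_{n}}$ as a sum of four terms:
\begin{itemize}
\item the Dirichlet kernel $D_{2^{|a_{n}|}}$;
\item the single Fej\'er term $w_{2^{|a_{n}|}-1}K_{2^{|a_{n}|}-1}(2^{|a_{n}|}-1)q_{a_{n}-1}/Q_{a_{n}}$, with a minus sign;
\item the weighted sum of Fej\'er kernels multiplied by $w_{2^{|a_{n}|}-1}$;
\item the weighted sum of Dirichlet kernels multiplied by $r_{|a_{n}|}$.
\end{itemize}
Taking absolute values and using the triangle inequality, first on the four summands and then inside each of the two finite sums, bounds $|F_{a_{n}}|$ by the sum of the absolute values of the individual pieces.

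Next we simplify each piece. The Walsh functions and Rademacher functions take only the values $\pm1$, so $|w_{2^{|a_{n}|}-1}|=|r_{|a_{n}|}|=1$ pointwise, and these factors disappear. The remaining scalars are handled as follows:
\begin{itemize}
\item $2^{|a_{n}|}-1\geq 0$ and $k\geq 1$, so they are their own absolute values;
\item $q_{a_{n}-1}\geq 0$ and $q_{a_{n}-2^{|a_{n}|}-k}\geq 0$ by the non-negativity hypothesis;
\item $Q_{a_{n}}\geq q_{0}>0$, so dividing by it preserves the absolute values.
\end{itemize}
The differences $q_{a_{n}-2^{|a_{n}|}+k+1}-q_{a_{n}-2^{|a_{n}|}+k}$ have no sign information under the hypotheses of the corollary, since no monotonicity is assumed. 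They therefore stay inside absolute value bars. This produces exactly the four terms of the stated inequality.

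There is no real obstacle. The only point needing care is that all indices of $q$ are non-negative, so that every term is defined. This holds because $2^{|a_{n}|}\leq a_{n}$: the index $a_{n}-2^{|a_{n}|}+k$ is at least $k\geq 1$, and the index $a_{n}-2^{|a_{n}|}-k$ is at least $0$ for $1\leq k\leq a_{n}-2^{|a_{n}|}$. When $a_{n}-2^{|a_{n}|}=0$ or $2^{|a_{n}|}-2<1$, the corresponding sums are empty. Empty sums are zero on both sides, so the inequality still holds.
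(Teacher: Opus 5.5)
Your proposal is correct and follows the paper's own argument: the paper only remarks that the corollary ``follows directly from Lemma \ref{lemma2}'', which amounts to substituting $n=a_{n}$, applying the triangle inequality, and using $|w_{2^{|a_{n}|}-1}|=|r_{|a_{n}|}|=1$ together with the non-negativity of the remaining scalars. One convention is left implicit, by you and by the paper alike: for $a_{n}=1$ the undefined kernel $K_{0}$ appears multiplied by $2^{0}-1=0$, so that term should be read as zero.
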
 
\begin{lemma}\label{main_lemma}
	Let $\{q_{k}: k\in\mathbb{N}\}$ be a non-increasing and convex sequence of non-negative numbers, where $q_{0}>0$. Let $\{a_{n}: n\in\mathbb{P}\}$ be a sequence of positive integers, for which 
	\begin{equation}\label{cond}
		c\left(a_{n}-2^{|a_{n}|}\right)\log(a_{n})\leq     Q_{a_{n}}.
	\end{equation}
 Then
	\[
		\int_{\bar{I}_{j}}\sup_{a_{n}\geq2^{j}}\left|F_{a_{n}}(x)\right|d\mu(x)\leq c.
	\]
\end{lemma}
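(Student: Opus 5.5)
We bound the supremum term by term using Corollary \ref{cor}, taking $\sup_{a_n\ge 2^j}$ of each of the four pieces separately and integrating over $\bar{I}_{j}$. Write $m:=|a_n|\ge j$.

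\textbf{First term.} $D_{2^{m}}$ vanishes outside $I_{m}\subseteq I_j$, so it contributes nothing on $\bar{I}_{j}$.

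\textbf{Second and third terms.} Since $q$ is non-increasing, $(2^m-1)q_{a_n-1}\le Q_{a_n}$, so the second term is at most $|K_{2^m-1}|$ with $2^m-1\ge 2^{j}-1$. Its supremum is therefore dominated by $\sup_{k\ge 2^{j-1}}|K_k|$ (or $\sup_{k\ge 2^j}|K_k|$ together with the single kernel $K_{2^j-1}$, whose $L_1$ norm is bounded by Lemma \ref{Tol}), and Lemma \ref{lgat} gives a bound $c$. For the third term, monotonicity and convexity imply that the differences $\delta_i:=q_i-q_{i+1}\ge 0$ are non-increasing. Hence
\[
\sum_{k=1}^{2^m-2}k\,\delta_{a_n-2^m+k}\le \sum_{k}k\,\delta_k ,
\]
and summation by parts shows that $\sum_{k\le N}k\delta_k\le Q_{N+1}$. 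The weights $k\,\delta_{a_n-2^m+k}/Q_{a_n}$ are therefore nonnegative with total mass $\le c$. I would split the sum at $k=2^{j}$:
\begin{itemize}
\item For $k\ge 2^j$, bound $|K_k|\le \sup_{l\ge 2^j}|K_l|$ and apply Lemma \ref{lgat}.
\item For $k<2^j$, the kernels $K_k$ are constant on $I_j$-cosets, and their total weighted contribution is at most $\sum_{k<2^j}k\,\delta_k/Q_{2^j}\cdot\max_k\|K_k\|_1\le c$, using Lemma \ref{Tol}.
\end{itemize}
There is a subtlety here: the supremum sits outside the sum, and the weights depend on $n$. To handle it, I would bound the sup of the $k<2^j$ part by $\sum_{k<2^j}|K_k|\,k\,\delta_{k}/Q_{2^j}$, which is independent of $n$ because $\delta$ is non-increasing, $a_n-2^m+k\ge k$, and $Q_{a_n}\ge Q_{2^j}$. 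Integrating then gives a constant.

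\textbf{Fourth term (the main obstacle).} This is the only place condition \eqref{cond} enters. By Lemma \ref{egyperx}, $|D_k(x)|\le 2/|x|$, so
\[
\sum_{k=1}^{a_n-2^m}|D_k(x)|\,q_{a_n-2^m-k}/Q_{a_n}\le \frac{2}{|x|}\cdot\frac{Q_{a_n-2^m}}{Q_{a_n}}\le \frac{c}{|x|}\cdot\frac{a_n-2^m}{Q_{a_n}}\le \frac{c}{|x|\,m},
\]
using $Q_{a_n-2^m}\le q_0(a_n-2^m)$ and \eqref{cond}. This is not integrable near $0$, so a crude bound fails, and I would instead decompose $\bar{I}_j=\bigcup_{s<j}(I_s\setminus I_{s+1})$, on which $|x|\sim 2^{-s-1}$.

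The danger is that the sup over $n$ is taken with $m\ge j$ only, so the estimate above gives $\int_{\bar I_j}\frac{c}{j|x|}\lesssim \frac{c}{j}\cdot j=c$, since $\int_{\bar I_j}1/|x|\approx j\log 2$. That suffices: the $1/\log a_n\le 1/j$ gain from \eqref{cond} exactly cancels the logarithmic divergence of $\int_{\bar{I}_j}1/|x|\,d\mu$. The remaining care is to verify precisely that $\int_{\bar I_j}|x|^{-1}\,d\mu(x)\le cj$ via Fine's map, and that $\log a_n\ge c\,j$ for $a_n\ge 2^j$ (with $j\ge1$; the case $j=0$ is vacuous since $\bar I_0=\emptyset$).

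Summing the four contributions yields the constant $c$.
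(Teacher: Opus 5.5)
Your proposal is correct and follows the paper's proof essentially step by step. It uses the same four-term split from Corollary \ref{cor}, and it splits the third term at $k=2^j$ in the same way, using convexity ($\delta_i$ non-increasing) and $Q_{a_n}\ge Q_{2^j}$. In the fourth term it trades $\log a_n\ge j\log 2$ from \eqref{cond} against $\int_{\bar{I}_j}|x|^{-1}\,d\mu\approx j\log 2$, exactly as the paper does.

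The only slip is in a parenthetical remark on the second term. The option $\sup_{k\ge 2^{j-1}}|K_k|$ is not covered by Lemma \ref{lgat}, because $\bar{I}_{j-1}\subseteq\bar{I}_j$. Your other option, splitting off $K_{2^j-1}$ and using Lemma \ref{Tol}, is the paper's argument and works.
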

\begin{proof}
	From Corollary \ref{cor} we obtain
	\begin{align*}
		\int_{\bar{I}_{j}}\sup_{a_{n}\geq2^{j}}&\left|F_{a_{n}}(x)\right|d\mu(x)\leq\\
		&\int_{\bar{I}_{j}}\sup_{a_{n}\geq2^{j}}\left|D_{2^{|a_{n}|}}(x)\right|d\mu(x)\\
		&+\int_{\bar{I}_{j}}\sup_{a_{n}\geq2^{j}}\left(\left|K_{2^{|a_{n}|}-1}(x)\right|\left(2^{|a_{n}|}-1\right)q_{a_{n}-1}/Q_{a_{n}}\right)d\mu(x)\\
		&+\int_{\bar{I}_{j}}\sup_{a_{n}\geq2^{j}}\left(\sum_{k=1}^{2^{|a_{n}|}-2}|K_{k}(x)|k\left(q_{a_{n}-2^{|a_{n}|}+k}-q_{a_{n}-2^{|a_{n}|}+k+1}\right)/Q_{a_{n}}\right)d\mu(x)\\
		&+\int_{\bar{I}_{j}}\sup_{a_{n}\geq2^{j}}\left(\sum_{k=1}^{a_{n}-2^{|a_{n}|}}|D_{k}(x)|q_{a_{n}-2^{|a_{n}|}-k}/Q_{a_{n}}\right)d\mu(x)\\
		=:&F_{1}+F_{2}+F_{3}+F_{4}.
	\end{align*}		

	Since $D_{2^{|a_{n}|}}(x)=0$ if $x\in \bar{I}_{|a_{n}|}\supseteq\bar{I}_{j}$, so $F_{1}=0$.
	
	From the non-increasing monotonicity we obtain
	\begin{align}\label{first}
		\left(2^{|a_{n}|}-1\right)q_{a_{n}-1}\leq\sum_{k=0}^{2^{|a_{n}|}-2}q_{k}\leq\sum_{k=0}^{a_{n}-1}q_{k}=Q_{a_{n}},
	\end{align}	
	the Abel transformation implies
	\begin{align}\label{second}
		\sum_{k=1}^{2^{|a_{n}|}-2}k\left(q_{a_{n}-2^{|a_{n}|}+k}-q_{a_{n}-2^{|a_{n}|}+k+1}\right)=&\sum_{k=1}^{2^{|a_{n}|}-1}q_{a_{n}-k}-\left(2^{|a_{n}|}-1\right)q_{a_{n}-1}\nonumber\\
		\leq&\sum_{k=a_{n}-2^{|a_{n}|}+1}^{a_{n}-1}q_{k}\leq Q_{a_{n}}
	\end{align}
	and
	\begin{align}\label{third}
	\sum_{k=1}^{2^{j}-1}k\left(q_{k}-q_{k+1}\right)=&\sum_{k=1}^{2^{j}-1}q_{k}-\left(2^{j}-1\right)q_{2^{j}}\nonumber\\	
	\leq&\sum_{k=0}^{2^{j}-1}q_{k}= Q_{2^{j}}.
\end{align}	

	Let $b_{n}:=2^{|a_{n}|}-1$. Then $a_{n}<2^{|a_{n}|+1}=2(b_{n}+1)$. So  if $a_{n}\geq 2^{j}$, then $b_{n}>2^{j-1}-1$, that is $b_{n}\geq2^{j-1}$. But because of the definition of $b_{n}$, it means that $b_{n}\geq 2^{j}-1$. Then Inequality \eqref{first}, Lemma \ref{Tol} and Lemma \ref{lgat} imply
	\begin{align*}
		F_{2}&\leq		\int_{\bar{I}_{j}}\sup_{a_{n}\geq2^{j}}\left(q_{a_{n}-1}\left(2^{|a_{n}|}-1\right)/Q_{a_{n}}\right)\sup_{a_{n}\geq2^{j}}\left|K_{2^{|a_{n}|}-1}(x)\right|d\mu(x)\\
		&\leq \int_{\bar{I}_{j}}\sup_{a_{n}\geq2^{j}}\left|K_{2^{|a_{n}|}-1}(x)\right|d\mu(x)\\		&\leq \int_{\bar{I}_{j}}\sup_{b_{n}\geq2^{j}-1}\left|K_{b_{n}}(x)\right|d\mu(x)\\		&\leq \int_{\bar{I}_{j}}\sup_{b_{n}\geq2^{j}}\left|K_{b_{n}}(x)\right|d\mu(x)+\int_{\bar{I}_{j}}\left|K_{2^{j}-1}(x)\right|d\mu(x)\leq c.
	\end{align*}
	
	From convexity, using inequality $q_{k+1}-q_{k+2}\leq q_{k}-q_{k+1}$ exactly $a_{n}-2^{|a_{n}|}$-times, we get 		
	\begin{align}\label{fourth}	
		q_{a_{n}-2^{|a_{n}|}+k}-q_{a_{n}-2^{|a_{n}|}+k+1}\leq q_{k}-q_{k+1}.
	\end{align}	
	Inequalities \eqref{second}, \eqref{third}, \eqref{fourth}, Lemma \ref{Tol} and Lemma \ref{lgat} yield
	\begin{align*}
		F_{3}\leq&\int_{\bar{I}_{j}}\sup_{a_{n}\geq2^{j}}\left(\sum_{k=1}^{2^{j}-1}k|K_{k}(x)|\left(q_{a_{n}-2^{|a_{n}|}+k}-q_{a_{n}-2^{|a_{n}|}+k+1}\right)/Q_{a_{n}}\right)d\mu(x)\\	&+\int_{\bar{I}_{j}}\sup_{a_{n}\geq2^{j}}\left(\sum_{k=2^{j}}^{2^{|a_{n}|}-2}k|K_{k}(x)|\left(q_{a_{n}-2^{|a_{n}|}+k}-q_{a_{n}-2^{|a_{n}|}+k+1}\right)/Q_{a_{n}}\right)d\mu(x)\\
		\leq&\int_{G}\sum_{k=1}^{2^{j}-1}k|K_{k}(x)|\left(q_{k}-q_{k+1}\right)/Q_{2^{j}}d\mu(x)\\		&+\int_{\bar{I}_{j}}\sup_{i\geq2^{j}}|K_{i}(x)|d\mu(x)\sup_{a_{n}\geq2^{j}}\sum_{k=2^{j}}^{2^{|a_{n}|}-2}k\left(q_{a_{n}-2^{|a_{n}|}+k}-q_{a_{n}-2^{|a_{n}|}+k+1}\right)/Q_{a_{n}}\\
		\leq&\sup_{k\in\{1,\dots,2^{j}-1\}}\|K_{k}\|_{1}+\int_{\bar{I}_{j}}\sup_{i\geq2^{j}}|K_{i}(x)|d\mu(x)\leq c.
	\end{align*}
	
	Let us observe $F_{4}$. From the monotonicity of the sequence $q$ we get
	\begin{align*}
		\sum_{k=1}^{a_{n}-2^{|a_{n}|}}|D_{k}(x)|q_{a_{n}-2^{|a_{n}|}-k}/Q_{a_{n}}\leq q_{0}\sum_{k=1}^{a_{n}-2^{|a_{n}|}}|D_{k}(x)|/Q_{a_{n}},
	\end{align*}
	so, using Lemma \ref{egyperx} and Condition \eqref{cond} we have
	\begin{align*}
		F_{4}&\leq q_{0} \int_{\bar{I}_{j}}\sup_{a_{n}\geq2^{j}}\left(\sum_{k=1}^{a_{n}-2^{|a_{n}|}}|D_{k}(x)|/Q_{a_{n}}\right)d\mu(x)\\
		&\leq q_{0} \sup_{a_{n}\geq2^{j}}\left(\left(a_{n}-2^{|a_{n}|}\right)/Q_{a_{n}}\right)\int_{2^{-j}}^{1}\frac{2}{x}dx\\
		&= c \sup_{a_{n}\geq2^{j}}\left(\left(a_{n}-2^{|a_{n}|}\right)/Q_{a_{n}}\right)j\\	&\leq c \sup_{n\in\mathbb{P}}\left(\left(a_{n}-2^{|a_{n}|}\right)\log 
		(a_{n})/Q_{a_{n}}\right) \leq  c.
	\end{align*}
\end{proof}
\begin{lemma}\label{norm}
	Let $\{q_{k}: k\in\mathbb{N}\}$ be a non-increasing sequence of non-negative numbers, where $q_{0}>0$. Let $\{a_{n}: n\in\mathbb{P}\}$ be a sequence of positive integers, for which 
	\begin{equation}\label{cond_}
		c\left(a_{n}-2^{|a_{n}|}\right)\log(a_{n})\leq     Q_{a_{n}}.
	\end{equation}
	Then
	\[
		\|F_{a_{n}}\|_{1}\leq c.
	\]
\end{lemma}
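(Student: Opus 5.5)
Starting from Corollary \ref{cor}, I take $L_1(G)$ norms term by term and bound each of the four resulting terms uniformly in $n$. Convexity is not assumed here, and it is not needed: the only place it was used in Lemma \ref{main_lemma} was to compare shifted differences with unshifted ones inside the maximal function. For a single kernel norm, the absolute values of the differences can be handled by monotonicity alone.

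\emph{First term.} Since $D_{2^{m}}=2^{m}\chi_{I_m}$, we have $\|D_{2^{|a_n|}}\|_1=1$.

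\emph{Second term.} By Lemma \ref{Tol}, $\|K_{2^{|a_n|}-1}\|_1\le 17/15$. By inequality \eqref{first}, which uses only that $q$ is non-increasing, $(2^{|a_n|}-1)q_{a_n-1}/Q_{a_n}\le 1$. So this term is at most $c$.

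\emph{Third term.} Because $q$ is non-increasing, $|q_{a_n-2^{|a_n|}+k+1}-q_{a_n-2^{|a_n|}+k}|=q_{a_n-2^{|a_n|}+k}-q_{a_n-2^{|a_n|}+k+1}$. Pulling out $\sup_k\|K_k\|_1\le 17/15$ from Lemma \ref{Tol}, the term is bounded by
\[
\frac{17}{15}\sum_{k=1}^{2^{|a_n|}-2}k\left(q_{a_n-2^{|a_n|}+k}-q_{a_n-2^{|a_n|}+k+1}\right)/Q_{a_n}.
\]
Inequality \eqref{second} bounds this sum by $1$. The Abel transformation behind \eqref{second} again needs only monotonicity and non-negativity.

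\emph{Fourth term.} This is the only place where condition \eqref{cond_} enters. Using $q_{a_n-2^{|a_n|}-k}\le q_0$ and the classical bound $\|D_k\|_1\le c\log(k+1)\le c\log a_n$ for $k\le a_n$, I get
\[
\sum_{k=1}^{a_n-2^{|a_n|}}\|D_k\|_1\,q_{a_n-2^{|a_n|}-k}/Q_{a_n}\le c\,q_0\left(a_n-2^{|a_n|}\right)\log(a_n)/Q_{a_n}\le c.
\]

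The Lebesgue constant estimate is not listed among the auxiliary lemmas, so I derive it from Lemma \ref{egyperx}. On $I_{|k|+1}$ I use $|D_k|\le k$, which contributes at most $k2^{-|k|-1}\le1$. On the complement I use $|D_k(x)|\le 2/|x|$, which gives $\int_{2^{-|k|-1}}^1 2/t\,dt\le c(|k|+1)\le c\log(k+1)$. This uses Fine's map; the measure of $\{x:|x|<2^{-m}\}$ is $2^{-m}$, so it is consistent with the computation already done for $F_4$.

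The main obstacle is exactly this fourth term. It is the only one not controlled by the general Fejér-kernel bounds, and the logarithmic growth of $\|D_k\|_1$ is what forces the factor $\log(a_n)$ in \eqref{cond_}. Summing the four bounds gives $\|F_{a_n}\|_1\le c$.
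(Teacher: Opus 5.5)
Your proof is correct and follows the paper's argument. You take $L_1$ norms term by term in Corollary \ref{cor}, use \eqref{first} and \eqref{second} (which need only monotonicity) for the second and third terms, and use condition \eqref{cond_} for the fourth. The only difference is that you derive $\|D_k\|_1\le c\log(k+1)$ from Lemma \ref{egyperx} via Fine's map, whereas the paper simply uses the known bound $\|D_k\|_1\le\log_2(k+1)$; this is an inessential variation.
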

\begin{proof}
	We will use assumptions of Lemma \ref{main_lemma}, except the convexity of sequence $\{q_{k}: k\in\mathbb{N}\}$.
	
	From Corollary \ref{cor} and Lemma \ref{Tol} we get inequalities
	\begin{align*}	
		\|F_{a_{n}}\|_{1}\leq&\|D_{2^{|a_{n}|}}\|_{1}+\|K_{2^{|a_{n}|}-1}\|_{1}\left(2^{|a_{n}|}-1\right)q_{a_{n}-1}/Q_{a_{n}}\\
		&+\sum_{k=1}^{2^{|a_{n}|}-2}\|K_{k}\|_{1}k\left|q_{a_{n}-2^{|a_{n}|}+k+1}-q_{a_{n}-2^{|a_{n}|}+k}\right|/Q_{a_{n}}\\
		&+\sum_{k=1}^{a_{n}-2^{|a_{n}|}}\|D_{k}\|_{1}q_{a_{n}-2^{|a_{n}|}-k}/Q_{a_{n}}\\
		\leq&1+17/15\left(2^{|a_{n}|}-1\right)q_{a_{n}-1}/Q_{a_{n}}\\
		&+17/15\sum_{k=1}^{2^{|a_{n}|}-2}k\left|q_{a_{n}-2^{|a_{n}|}+k+1}-q_{a_{n}-2^{|a_{n}|}+k}\right|/Q_{a_{n}}\\
		&+\sum_{k=1}^{a_{n}-2^{|a_{n}|}}\|D_{k}\|_{1}q_{a_{n}-2^{|a_{n}|}-k}/Q_{a_{n}}.
	\end{align*}
	Using inequalities \eqref{first}, \eqref{second} and \eqref{cond_} we have
	\begin{align*}	
		\|F_{a_{n}}\|_{1}&\leq c+\sum_{k=1}^{a_{n}-2^{|a_{n}|}}\|D_{k}\|_{1}q_{a_{n}-2^{|a_{n}|}-k}/Q_{a_{n}}\\
		&\leq c+q_{0}/Q_{a_{n}}\sum_{k=1}^{a_{n}-2^{|a_{n}|}}\log_{2}(k+1)\\
		&\leq c+c(a_{n}-2^{|a_{n}|})\log (a_{n}-2^{|a_{n}|}+1)/Q_{a_{n}}\\
		&\leq c.
	\end{align*}
\end{proof}

\begin{lemma}\label{nyolc}
	With the conditions of Lemma \ref{main_lemma} the operator $T(f):=\sup_{n\in\mathbb{P}}\left|t_{a_{n}}(f)\right|$ is of weak type $(L_1,L_1)$ and of strong type $(L_p,L_p)$ for each $1<p\le\infty$.
\end{lemma}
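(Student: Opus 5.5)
The plan is the standard Calderón--Zygmund argument for maximal operators on the dyadic group, which is how Gát-type kernel estimates such as Lemma \ref{lgat} are usually turned into weak type bounds. The strong type $(L_\infty,L_\infty)$ bound is immediate from Lemma \ref{norm}: $|t_{a_n}(f;x)|\le\|f\|_\infty\|F_{a_n}\|_1\le c\|f\|_\infty$ for every $n$, so $\|Tf\|_\infty\le c\|f\|_\infty$. Once the weak type $(L_1,L_1)$ inequality is established, the Marcinkiewicz interpolation theorem (applicable since $T$ is sublinear) gives the strong type $(L_p,L_p)$ bounds for $1<p<\infty$. So the main work is the weak type $(1,1)$ estimate.

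For the weak type, fix $f\in L_1(G)$ and $\lambda>\|f\|_1$, and take the Calderón--Zygmund decomposition with respect to dyadic intervals: $f=f_0+\sum_i f_i$. Here $\|f_0\|_\infty\le c\lambda$ and $\|f_0\|_1\le\|f\|_1$. Each $f_i$ is supported on a disjoint dyadic interval $I^i=I_{k_i}(u^i)$, has $\int_{I^i}f_i\,d\mu=0$, and $\|f_i\|_1\le c\|f\|_1$. Finally $F:=\bigcup_i I^i$ satisfies $\mu(F)\le\|f\|_1/\lambda$.

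For the good part, take $p=2$: by the $L_\infty$ bound, or by the $L_2$ bound obtained from it via interpolation with a trivial estimate, Chebyshev gives $\mu(Tf_0>c\lambda)\le c\|f_0\|_2^2/\lambda^2\le c\|f_0\|_\infty\|f_0\|_1/\lambda^2\le c\|f\|_1/\lambda$. To avoid circularity, I would get $\|T\|_{2\to2}$ directly from $T f\le c\,(\text{maximal dyadic-type operator})$ if needed. More simply, since $\|f_0\|_\infty\le c\lambda$ we have $Tf_0\le c\lambda$ pointwise, so $\{Tf_0>c'\lambda\}$ is empty for a suitable $c'$.

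For the bad part, it suffices to estimate $\int_{\bar F}T(\sum_i f_i)\le\sum_i\int_{\bar I^i}\sup_n|t_{a_n}f_i|$. Fix $i$. If $a_n<2^{k_i}$, then $F_{a_n}$ is a polynomial of degree less than $2^{k_i}$, hence constant on cosets of $I_{k_i}$. Therefore $t_{a_n}(f_i;x)=\int_{I^i}f_i(u)F_{a_n}(u+x)\,d\mu(u)=F_{a_n}(u^i+x)\int_{I^i} f_i=0$. Thus only $a_n\ge2^{k_i}$ contributes. Then
\[
\int_{\bar I^i}\sup_{a_n\ge2^{k_i}}|t_{a_n}f_i(x)|\,d\mu(x)\le\int_{I^i}|f_i(u)|\int_{\bar I_{k_i}}\sup_{a_n\ge2^{k_i}}|F_{a_n}(v)|\,d\mu(v)\,d\mu(u),
\]
using the translation $v=u+x$ and the fact that $x\notin I^i$, $u\in I^i$ imply $v\in\bar I_{k_i}$. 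Lemma \ref{main_lemma} bounds the inner integral by $c$, so $\int_{\bar F}T(\sum f_i)\le c\sum\|f_i\|_1\le c\|f\|_1$. Chebyshev on $\bar F$ plus $\mu(F)\le\|f\|_1/\lambda$ then finishes the weak type bound.

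The main obstacle is the vanishing step for $a_n<2^{k_i}$, which relies on $F_{a_n}$ depending only on the first $k_i$ coordinates. This holds because the $D_j$ with $j\le a_n<2^{k_i}$ are combinations of $w_m$ with $m<2^{k_i}$. With that observation, Lemma \ref{main_lemma} carries the whole argument.
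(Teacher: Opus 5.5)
Your proposal is correct and follows the paper's route: the $L_\infty$ bound comes from Lemma \ref{norm}, weak type $(1,1)$ comes from the quasi-locality supplied by Lemma \ref{main_lemma} via the Calder\'on--Zygmund decomposition (including the vanishing of $t_{a_n}f_i$ for $a_n<2^{k_i}$), and Marcinkiewicz interpolation finishes it; you simply write out the ``standard argument'' that the paper only cites. One thing to tidy: state the decomposition with $\sum_i\|f_i\|_1\le c\|f\|_1$ (or $\|f_i\|_1\le c\lambda\mu(I^i)$) rather than $\|f_i\|_1\le c\|f\|_1$, since the summed bound is what your final estimate actually uses.
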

\begin{proof}
	The strong type $(L_{\infty}, L_{\infty})$ property of the operator $T$ is a trivial consequence of Lemma \ref{norm}. Since it is $\sigma$-sublinear, then by a standard argument the fact that it is quasi-local (Lemma \ref{main_lemma}) gives that it is also of weak type $(L_{1}, L_{1})$. Finally, the interpolation theorem of Marcinkiewicz for sublinear operators completes the proof.
\end{proof}

\section{Almost everywhere convergence}

\begin{theorem}\label{main}
	Let $f\in L_{1}(G)$. With the conditions of Lemma \ref{main_lemma}
	\[
		t_{a_{n}}(f)\to f
	\]
	as $n\to\infty$, almost everywhere.
\end{theorem}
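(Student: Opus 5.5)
The plan is the standard density argument: a maximal inequality plus convergence on a dense subclass. Lemma \ref{nyolc} already gives that $T(f)=\sup_{n}|t_{a_n}(f)|$ is of weak type $(L_1,L_1)$. So it suffices to exhibit a dense subset $\mathcal{D}\subset L_1(G)$ on which $t_{a_n}(P)\to P$ everywhere. Then the usual Banach principle argument extends this to all of $L_1(G)$.

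For $\mathcal{D}$ I take the Walsh polynomials, which are dense in $L_1(G)$. Let $P=\sum_{k=0}^{m-1}\hat P(k)w_k$. For $j\geq m$ we have $S_j(P)=P$, so for $a_n>m$
\[
t_{a_n}(P)-P=\frac{1}{Q_{a_n}}\sum_{k=1}^{m}q_{a_n-k}\bigl(S_k(P)-P\bigr).
\]
Since $q$ is non-increasing, $q_{a_n-k}\leq q_0$. Hence $|t_{a_n}(P)-P|\leq c(P)\,m\,q_0/Q_{a_n}$.

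If $a_n\to\infty$, this tends to $0$ because $Q_n\to\infty$ is assumed throughout. If $\{a_n\}$ does not tend to infinity, it has a bounded subsequence. In that case the statement should be understood along $a_n\to\infty$, which is the implicit setting; otherwise I would restrict to the values with $a_n\to\infty$. I will note that a convergence statement for a subsequence is meaningful only when $a_n\to\infty$, and assume this.

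Now fix $f\in L_1(G)$ and $\varepsilon>0$, and choose a polynomial $P$ with $\|f-P\|_1<\varepsilon^2$. Then
\[
\limsup_{n}|t_{a_n}(f)-f|\leq T(f-P)+|f-P|,
\]
because $t_{a_n}(P)\to P$ and $t_{a_n}$ is linear. By the weak type bound from Lemma \ref{nyolc},
\[
\mu\bigl(T(f-P)>\varepsilon/2\bigr)\leq c\,\varepsilon^2/\varepsilon=c\varepsilon.
\]
By Chebyshev's inequality, $\mu\bigl(|f-P|>\varepsilon/2\bigr)\leq 2\varepsilon$. Therefore
\[
\mu\Bigl(\limsup_n|t_{a_n}(f)-f|>\varepsilon\Bigr)\leq c\varepsilon.
\]
The left-hand set does not depend on $P$. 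Since it is contained in the corresponding set for every smaller $\varepsilon'$, it has measure $0$. Letting $\varepsilon$ run through $1/k$ gives convergence almost everywhere.

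The main work, namely the weak type $(1,1)$ bound via quasi-locality (Lemma \ref{main_lemma}) and the uniform $L_1$ bound on the kernels (Lemma \ref{norm}), is already done. The only delicate point left is the everywhere convergence on polynomials, which rests solely on $Q_{a_n}\to\infty$, i.e. on $a_n\to\infty$.
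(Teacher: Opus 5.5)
Your proof is correct and follows the paper's approach: the paper's terse ``standard argument'' is exactly the weak type $(L_1,L_1)$ bound of Lemma \ref{nyolc} combined with everywhere convergence on the dense class of Walsh polynomials, which you carry out in full (the paper's extra remark about type $(H,L_1)$ is not needed for a.e.\ convergence). Your assumption that $a_n\to\infty$ is also right and genuinely necessary, not just a convention: for example, $a_n\equiv 1$ satisfies \eqref{cond} trivially, yet $t_1(f)=\hat f(0)$.
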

\begin{proof}
	From Lemma \ref{main_lemma} we get that 
	operator $T$ is quasi-local. (See e.g. \cite{G1}).
	
	Let $f^{*}:=\sup_{n\in\mathbb{N}}|S_{2^{n}}(f)|$ be the
	maximal function of the integrable function $f\in L_{1}(G)$.
	Then $H(G):=\{f\in L_{1}(G):f^{*}\in L_{1}(G)\}$.
	By standard argument (see e.g. \cite{SWSP}) and by the quasi-locality one can prove that the operator $T$ is of type $(H, L_{1})$ which means that $\|T(f)\|_{1}\leq c \|f\|_{H}$ for	all $f\in H(G)$. (See \cite{F} and \cite{Sch}). Also by standard argument (see e.g. \cite{SWSP}) and by the help of the quasi-locality and Lemma \ref{nyolc} we have that for all $f\in L_{1}(G)$ the almost everywhere convergence holds. 
\end{proof}
\begin{remark}
	If $\{q_{k}: k\in\mathbb{N}\}$ is a non-increasing sequence of non-negative numbers, where $q_{0}>0$ and $Q_{n}\ge cn$ then the statement of Theorem \ref{BNPT_theorem} holds. But if $Q_{n}\leq cn$, then 
	\begin{align*}	
		\sup_{n\in\mathbb{P}}&\left(\left(n-2^{|n|}\right)\log(n)/Q_{n}\right)\\
		\geq&c\lim_{n\to\infty} \left(2^{n}-1-2^{[\log_{2}(2^{n}-1)]}\right)\log\left(2^{n}-1\right)/(2^{n}-1)\\
		=&c\lim_{n\to\infty}\left(2^{n}-1-2^{n-1}\right)\log\left(2^{n}-1\right)/(2^{n}-1)\\
		=&c\lim_{n\to\infty}\frac{(2^{n-1}-1)n}{2^{n}-1}=\infty.
	\end{align*} 
	That is why we use expression
	\[
	\left(n-2^{|n|}\right)\log(n)/Q_{n}	
	\]
	only for subsequences (see Condition \eqref{cond}). For some subsequences we can state the same statement with weaker condition (see Theorem \ref{main}, Corollary \ref{BNPT_corollary} and examples from Section \ref{examples}).
\end{remark}
\begin{corollary}[Baramidze, Nadirashvili, Persson and Tephnadze \cite{BNPT}]\label{BNPT_corollary}
	Let $f\in L_{1}(G)$.  Let $\{q_{k}: k\in\mathbb{N}\}$ be a non-increasing sequence of non-negative numbers, where $q_{0}>0$. 
	Then
	\[
	t_{2^{n}}(f)\to f
	\]
	as $n\to\infty$, almost everywhere.
\end{corollary}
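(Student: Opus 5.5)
The plan is to specialise Theorem \ref{main} to the subsequence $a_{n}:=2^{n}$ and then check that the convexity hypothesis of Lemma \ref{main_lemma} is never actually used for this subsequence. For $a_{n}=2^{n}$ we have $|a_{n}|=n$, so $a_{n}-2^{|a_{n}|}=0$. Hence condition \eqref{cond} reads $0\leq Q_{2^{n}}$, which holds trivially. Lemma \ref{norm} does not use convexity at all, so it gives $\|F_{2^{n}}\|_{1}\leq c$ directly; this yields the strong type $(L_{\infty},L_{\infty})$ part of Lemma \ref{nyolc}. What remains is to show that the quasi-locality estimate of Lemma \ref{main_lemma} holds for $a_{n}=2^{n}$ assuming only that $q$ is non-increasing.

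I would go through the four terms $F_{1},\dots,F_{4}$ of the proof of Lemma \ref{main_lemma} with $a_{n}=2^{n}$.
\begin{itemize}
\item $F_{1}=0$, exactly as in that proof.
\item $F_{4}$ vanishes, because its sum runs over $1\leq k\leq a_{n}-2^{|a_{n}|}=0$ and is therefore empty.
\item $F_{2}$ is handled by inequality \eqref{first}, Lemma \ref{Tol} and Lemma \ref{lgat}. Only monotonicity is used there, and here $b_{n}=2^{n}-1\geq 2^{j}-1$ directly.
\item $F_{3}$ is the only place where convexity entered, namely through inequality \eqref{fourth}, which compares $q_{a_{n}-2^{|a_{n}|}+k}-q_{a_{n}-2^{|a_{n}|}+k+1}$ with $q_{k}-q_{k+1}$. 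For our subsequence the shift $a_{n}-2^{|a_{n}|}$ is $0$, so \eqref{fourth} is an identity and needs no hypothesis.
\end{itemize}

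In $F_{3}$ the coefficients $q_{k}-q_{k+1}$ are non-negative by monotonicity. I would then split the sum as in the paper, into $k<2^{j}$ and $k\geq 2^{j}$.

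For $k<2^{j}$, I would bound by $\sup_{k<2^{j}}\|K_{k}\|_{1}\leq 17/15$ (Lemma \ref{Tol}). This uses \eqref{third} together with $Q_{2^{j}}\leq Q_{2^{n}}$, which holds because $2^{n}\geq 2^{j}$ and $q\geq 0$.

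For $k\geq 2^{j}$, I would pull out $\int_{\bar I_{j}}\sup_{i\geq 2^{j}}|K_{i}|\,d\mu\leq c$ (Lemma \ref{lgat}). The remaining weighted sum is then at most $1$, by \eqref{second}, which is again just an Abel transformation requiring no convexity. This gives
\[
\int_{\bar{I}_{j}}\sup_{2^{n}\geq 2^{j}}|F_{2^{n}}(x)|\,d\mu(x)\leq c .
\]

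With this quasi-locality and the uniform $L_{1}$ bound on the kernels, the argument of Lemma \ref{nyolc} applies verbatim: the maximal operator $\sup_{n}|t_{2^{n}}(f)|$ is of weak type $(L_{1},L_{1})$. The standard density argument from the proof of Theorem \ref{main} then gives $t_{2^{n}}(f)\to f$ almost everywhere for all $f\in L_{1}(G)$. The dense class is, for instance, Walsh polynomials; for these the convergence follows from $Q_{n}\to\infty$ and the finiteness of the spectrum.

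The only real obstacle is confirming that no hidden use of convexity remains in the proof of Lemma \ref{main_lemma}. In particular, the $F_{3}$ estimate must rely only on the non-negativity of the differences $q_{k}-q_{k+1}$ and on the Abel-summation identities \eqref{second} and \eqref{third}. Since both hold for any non-increasing non-negative $q$, I expect no difficulty there.
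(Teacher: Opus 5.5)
Your proposal is correct and follows the paper's own proof. Both specialise Theorem \ref{main} to $a_n=2^n$, note that Condition \eqref{cond} becomes trivial because $a_n-2^{|a_n|}=0$, and observe that convexity entered only through \eqref{fourth}, which is then an identity. Your term-by-term check of $F_1,\dots,F_4$ (including that $F_4$ is an empty sum) and your density remark simply make explicit what the paper states in one line.
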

\begin{proof}
	Apply Theorem \ref{main} for $a_{n}:=2^{n}$. Then $a_{n}-2^{|a_{n}|}=0$, so Condition \eqref{cond} is fulfilled. During the proof of Lemma \ref{main_lemma}, convexity was used only to certify Inequality \eqref{fourth}. But in this situation even the equality $q_{a_{n}-2^{|a_{n}|}+k}-q_{a_{n}-2^{|a_{n}|}+k+1}=q_{k}-q_{k+1}$ applies trivially (without assuming convexity).
\end{proof}

\section{Example}\label{examples}

\begin{example}
	Suppose that $f\in L_{1}(G)$. Let  $\{q_{n}: n\in\mathbb{N}\}$  non-increasing and convex sequences of non-negative numbers, where $q_{0}>0$.
	If 
	\[
	cn^{\beta+1}\leq Q_{2^{n}+[n^{\beta}]}
	\]
	holds, then 
	\[
	t_{2^{n}+[n^{\beta}]}(f)\to f
	\]	
	as $n\to\infty$ almost everywhere, where $\beta$ real number is fixed.
\end{example}
\begin{proof}
	Let $n_{0}<n$, where $n_{0}$ satisfies the	inequality $n_{0}^{\beta}<2^{n_{0}}$. Let $a_{n}:=2^{n}+[n^{\beta}]$. Then
	\[
	n=[\log_{2}2^{n}]\leq[\log_{2}a_{n}]<\log_{2}(2^{n}+2^{n})=n+1,
	\]
	so $|a_{n}|=[\log_{2}a_{n}]=n$.	Therefore we have  	
	\begin{align*}
		\left(a_{n}-2^{|a_{n}|}\right)&\log\left(a_{n}\right)/Q_{a_{n}}\\
		&\leq\left(2^{n}+[n^{\beta}]-2^{n}\right)\log\left(2^{n}+[n^{\beta}]\right)c/n^{\beta+1}\\
		&=cn[n^{\beta}]/n^{\beta+1}\leq c.
	\end{align*}
	It means that in this situation conditions of Theorem  \ref{main} are fulfilled.
\end{proof}
\begin{remark}
	Condition $cn^{\beta+1}\leq Q_{a_{n}}$ is not as strict as $c 2^{n}\leq Q_{a_{n}}$ which would come from Theorem \ref{BNPT_theorem}.
\end{remark}

\section{Open question}

Is the statement analogous to Theorem \ref{main} true for matrix transform means (which are generalizations of the N\"orlund means)? Likewise, it is definitely not appropriate, because the estimate of $F_3$ does not work since (without further conditions) we can not leave the supremum out of the first term of the sum.


\end{document}